\def\qed{\hfill {\hbox{${\vcenter{\vbox{               
   \hrule height 0.4pt\hbox{\vrule width 0.4pt height 6pt
   \kern5pt\vrule width 0.4pt}\hrule height 0.4pt}}}$}}}
\def\hat{\widehat}
\newtheorem{theorem}{Theorem}
\newtheorem{definition}{Definition}
\newtheorem{cor}[theorem]{Corollary}
\newtheorem{example}{Example}
\newtheorem{remark}[example]{Remark}
\newtheorem{conjecture}{Conjecture}
\newenvironment{proof}[1][Proof]{\smallskip\noindent{\bf #1.}\quad}{\qed\par\medskip}
\date{}
\title{\Large \textbf{Isotopy and Homotopy Invariants of Classical and Virtual Pseudoknots}}
\author{
Fran\c{c}ois Dorais\footnote{francois.g.dorais@dartmouth.edu, Dartmouth College, Hanover, NH 03755, United States}\hspace{1cm}
Allison Henrich\footnote{henricha@seattleu.edu, Seattle University, Seattle, WA 98122, United States}\hspace{1cm}
Slavik Jablan\footnote{sjablan@gmail.com, The Mathematical Institute, Belgrade, 11000, Serbia}\hspace{1cm}
Inga Johnson\footnote{ijohnson@willamette.edu, Willamette University, Salem, OR 97301, United States}
}
\begin{document}

\maketitle

\begin{abstract}
Pseudodiagrams are knot or link diagrams where some of the crossing information is missing. Pseudoknots are equivalence classes of pseudodiagrams, where equivalence is generated by a natural set of Reidemeister moves. In this paper, we introduce a Gauss-diagrammatic theory for pseudoknots which gives rise to the notion of a virtual pseudoknot. We provide new, easily computable isotopy and homotopy invariants for classical and virtual pseudodiagrams. We also give tables of unknotting numbers for homotopically trivial pseudoknots and homotopy classes of homotopically nontrivial pseudoknots. Since pseudoknots are closely related to singular knots, this work also has implications for the classification of classical and virtual singular knots.
\end{abstract}
\bigskip

Mathematics Subject Classification 2010: 57M27\\

Keywords: pseudoknot, virtual pseudoknot, singular knot


\section{Introduction}

Pseudodiagrams are knot or link diagrams where some of the crossing information is missing.  Where there is missing information, instead of a crossing with clearly marked over- and under-strands, a {\em precrossing} or double-point of the curve appears in the diagram.   Pseudodiagrams of spatial graphs, knots and links were first introduced as potential models for biological applications by Hanaki~\cite{hanaki}. Classical and virtual pseudodiagrams were further studied in~\cite{SMALL}.  Henrich et al.~\cite{pseudoknot} first defined pseudoknots as equivalence classes of pseudodiagrams up to rigid vertex isotopy and a collection of natural Reidemeister moves.  This collection of moves includes the classical Reidemeister (R) moves and a number of additional pseudo-Reidemeister (PR) moves as seen in Figure~\ref{Rmoves}.
 
 \begin{figure}[htbp]
\begin{center}
\includegraphics[scale=.75]{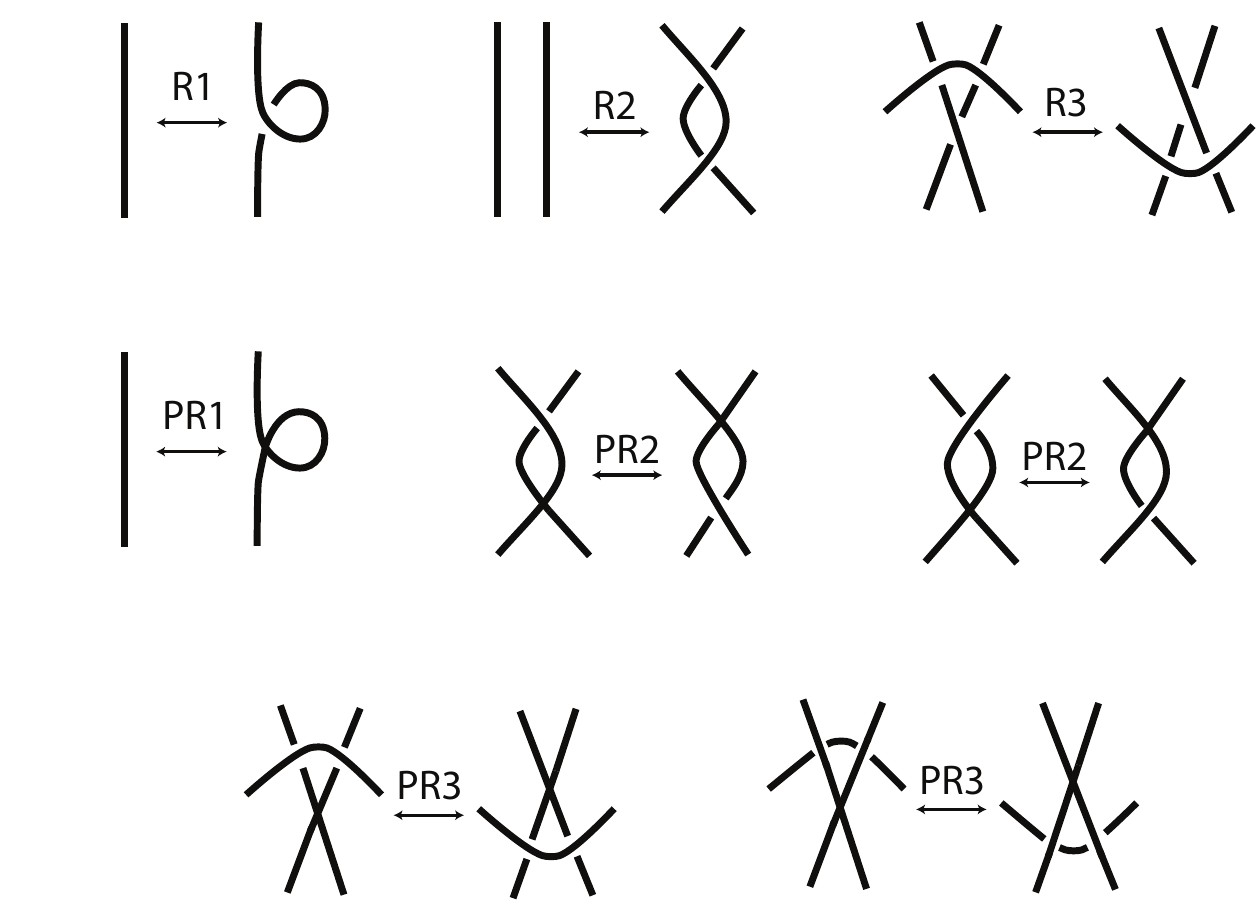}
\caption{{\bf Classical and Pseudo-Reidemeister moves}}
\label{Rmoves}
\end{center}
\end{figure}

At first glance, pseudo-Reidemeister moves two (PR2) and three (PR3), along with rigid vertex isotopy at the double-point(s), are equivalences akin to what is seen in the theory of singular knots.  However, the inclusion of the pseudo-Reidemeister one (PR1) move makes the theory of pseudoknots distinct.  

The most familiar representation of a given knot is that of a knot diagram which is a shadow of the knot decorated with crossing information at transverse intersection points. An alternate and sometimes more useful presentation is a Gauss diagram which consists of a core circle oriented counterclockwise (drawn to represent the entire curve of the oriented knot) together with the pre-image of double-points connected by chords along the circle.  The crossing information is indicated on the chord by an arrow pointing from the over-strand to the under-strand and a sign on the chord specifying whether the crossing is left or right handed.  See Figure~\ref{G-example}.

 \begin{figure}[htbp]
\begin{center}
\includegraphics[scale=.6]{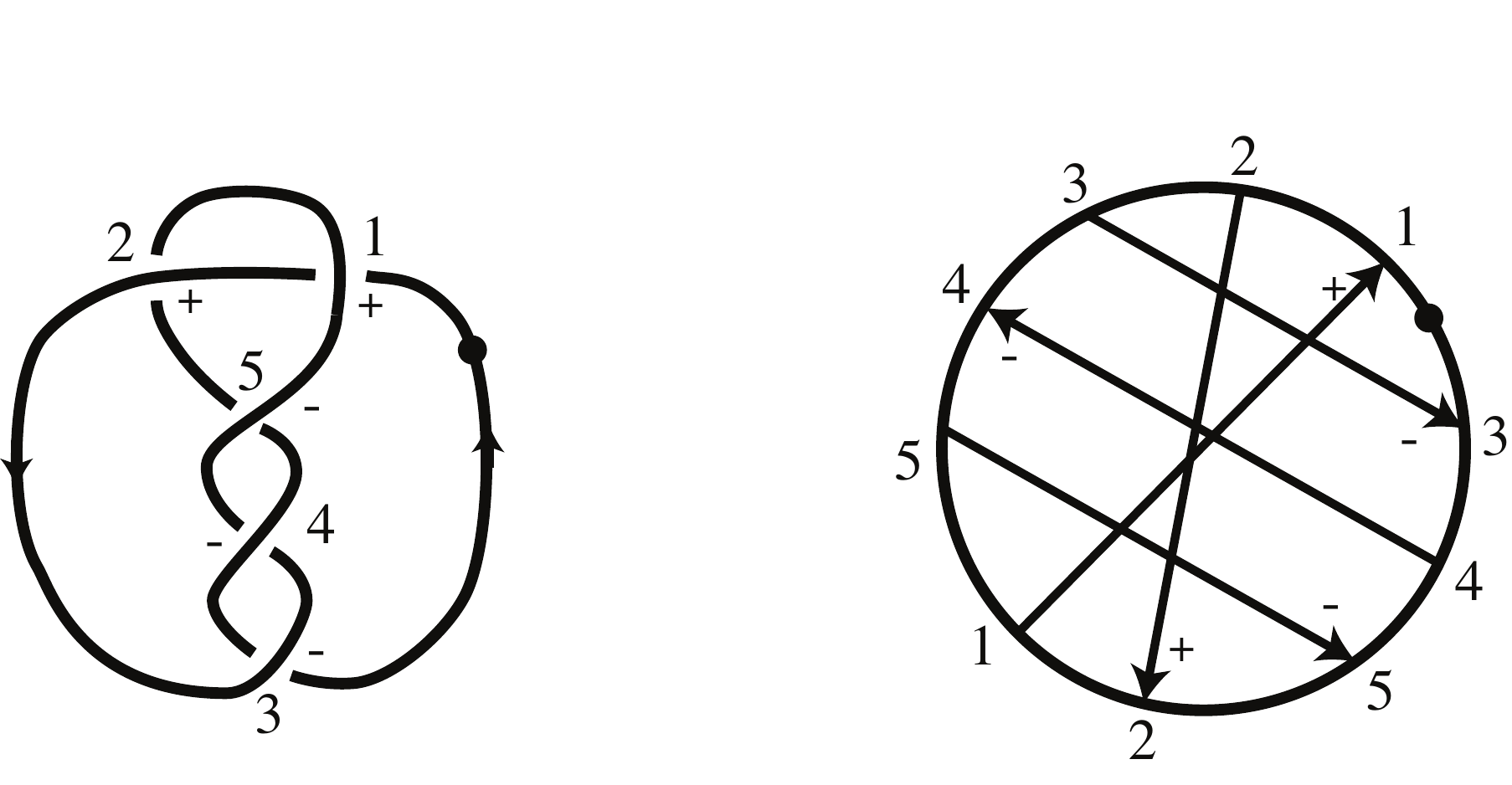}
\caption{{\bf A knot diagram and its corresponding Gauss diagram}}
\label{G-example}
\end{center}
\end{figure}

We may extend the definition of a Gauss diagram to pseudoknots as follows.  All classical crossings in a pseudoknot are represented in the Gauss diagram by a standard chord decorated with arrow and crossing sign.  A precrossing is represented by a bold or thicker chord.  We must take care, however, to indicate the proper `handedness' of the precrossing.  Figure~\ref{precrossing-ab} indicates a general precrossing and its decorated arrow within the Gauss diagram, and Figure~\ref{pseudoG-example} gives an example of a pseudoknot and its Gauss diagram.  Notice that in the Gauss diagram, the precrossing arrow points in the same direction as the classical arrow would point if the precrossing were resolved positively.

 \begin{figure}[htbp]
\begin{center}
\includegraphics[scale=0.6]{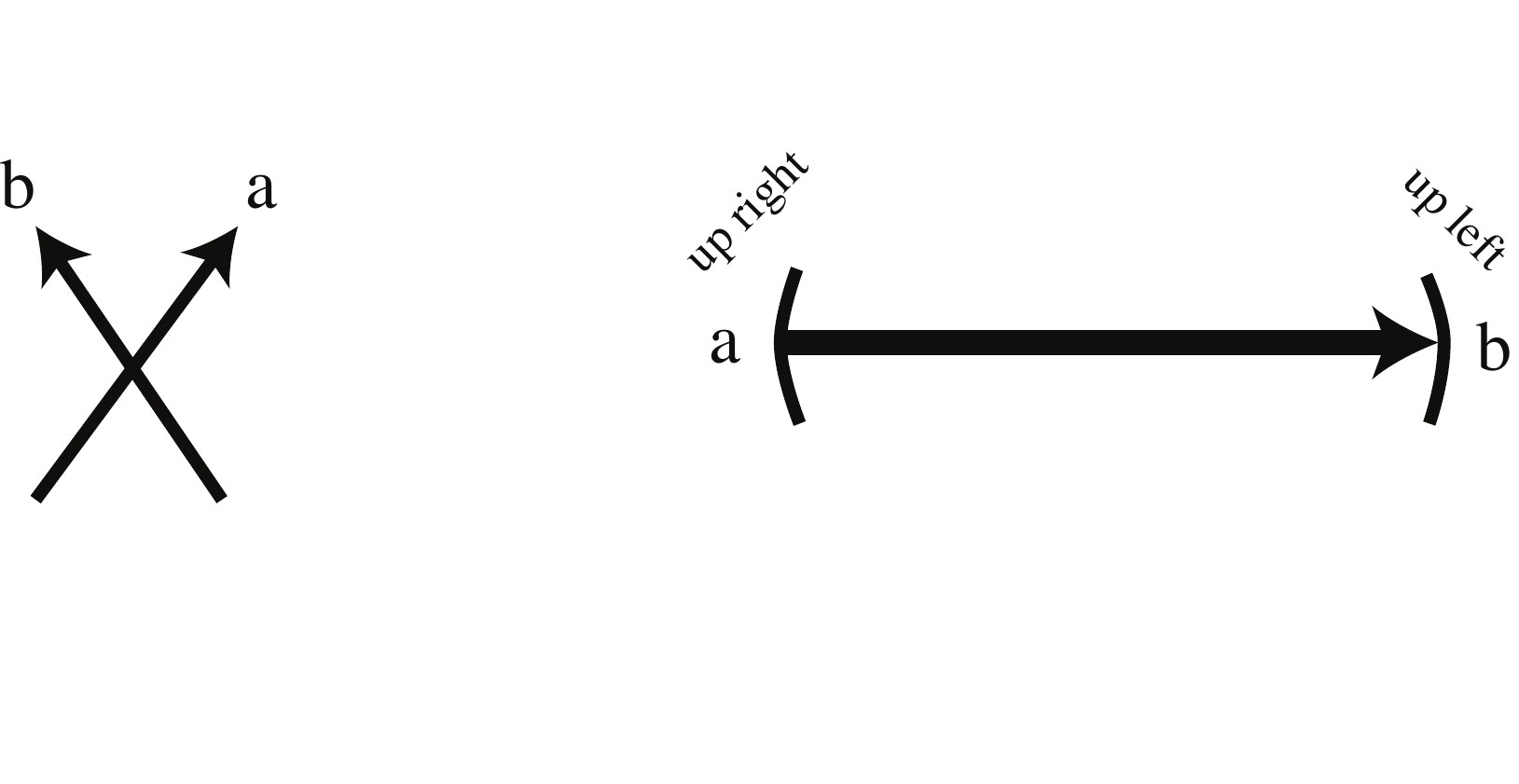}
\caption{{\bf A precrossing and a subsection of its Gauss diagram}}
\label{precrossing-ab}
\end{center}
\end{figure}

 \begin{figure}[htbp]
\begin{center}
\includegraphics[scale=.6]{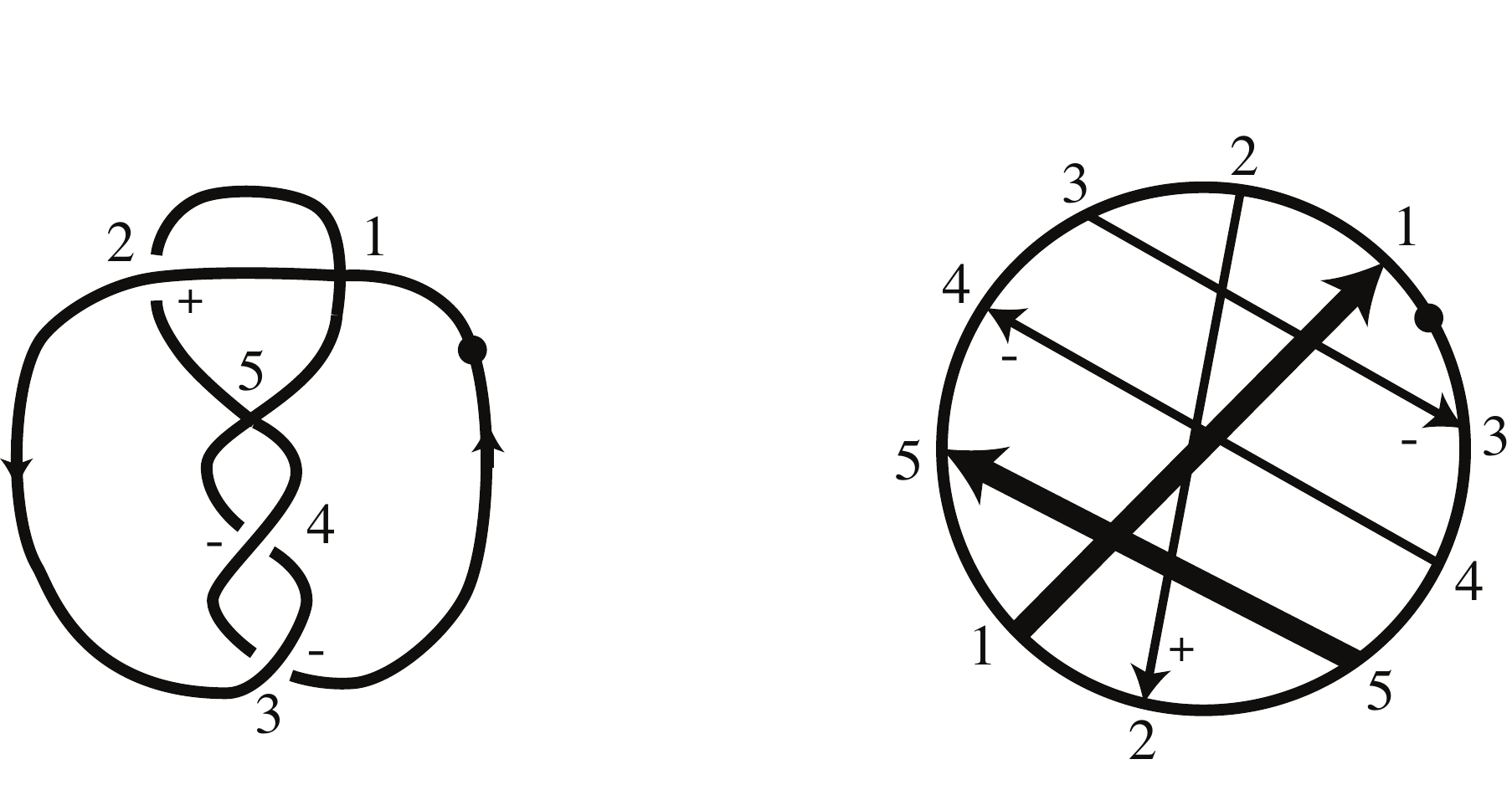}
\caption{{\bf A pseudoknot diagram and its corresponding Gauss diagram}}
\label{pseudoG-example}
\end{center}
\end{figure}

To make use of Gauss diagrams we must understand equivalence of Gauss diagrams up to classical and pseudo-Reidemeister moves.  Polyak's generating set of classical Reidemeister moves (Figure 5 in~\cite{polyak}) are represented as Gauss diagram equivalences in Figure~\ref{cR-Gauss}. Only the indicated chords appear in the solid arcs of the circle, and the portions of the diagram where the circle is dotted may include the endpoints of any collection of chords. Notice that in the R2 move, both chord arrows must point to the same section of solid arc in the Gauss diagram, which represents the arc with two undercrossings in the R2 knot diagram. Observe that within the R3 move, two chord arrows will always emanate from one of the solid arcs. This represents the arc containing two overcrossings in the R3 knot diagram. 

A subset of the complete collection of PR moves are represented as Gauss diagrams in Figure~\ref{PR-Gauss}. Four of the eight PR3 moves are shown. The other four are related to those pictured by switching all signs and arrows of the classical chords.

\begin{figure}[Htbp]
\begin{center}
\includegraphics[scale=1]{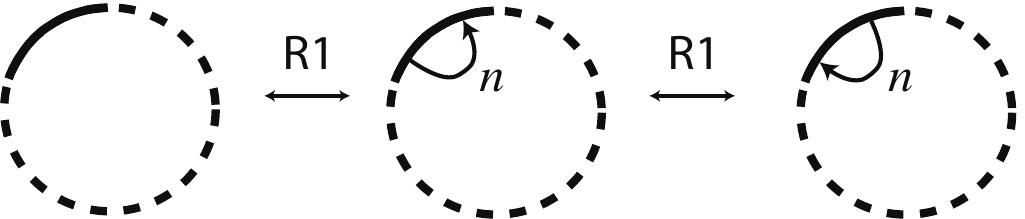}
\vspace{.2in}

\includegraphics[scale=1]{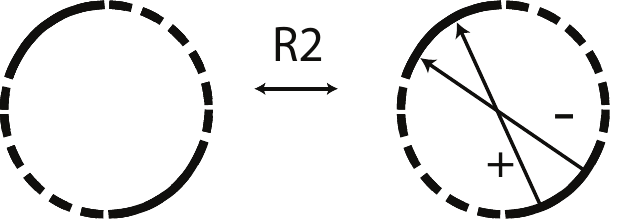}
\vspace{.2in}

\includegraphics[scale=1]{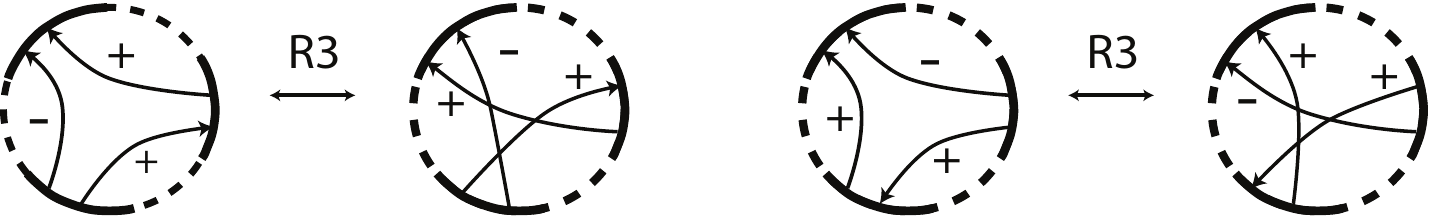}
\caption{{\bf Gauss diagrams of Polyak's classical Reidemeister moves with $n=\pm1$}}
\label{cR-Gauss}
\end{center}
\end{figure}

\begin{figure}[Htbp]
\begin{center}
\includegraphics[scale=1]{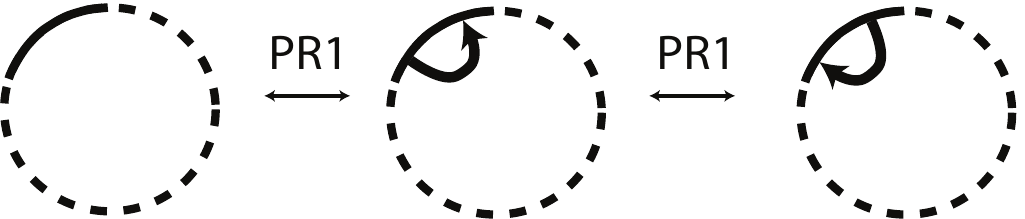}
\vspace{.2in}
\includegraphics[scale=1]{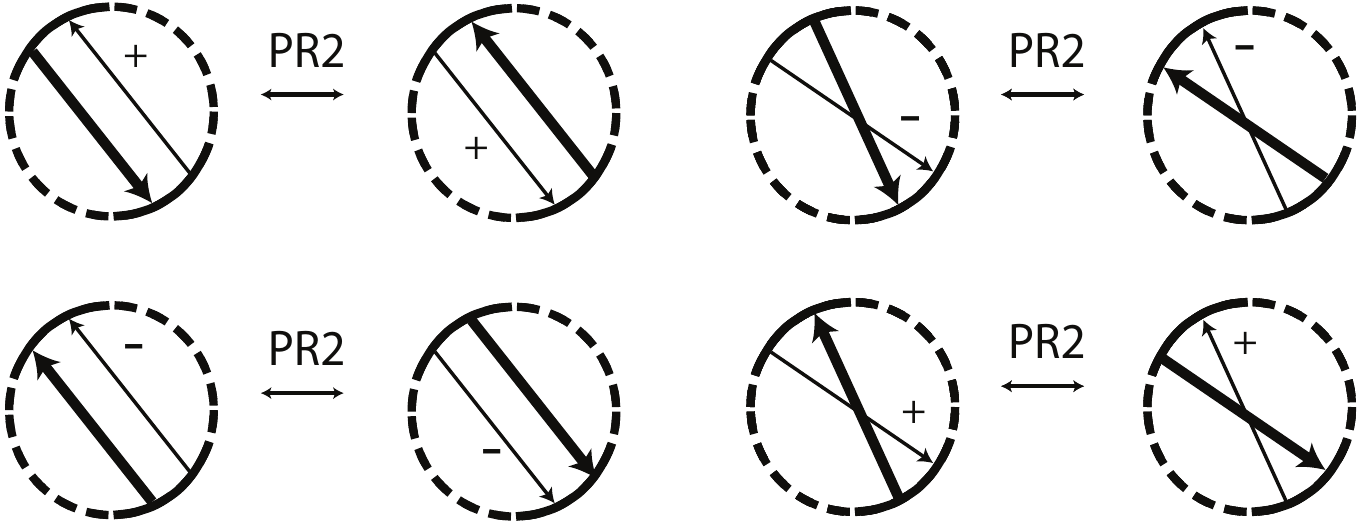}
\vspace{.2in}
\includegraphics[scale=1]{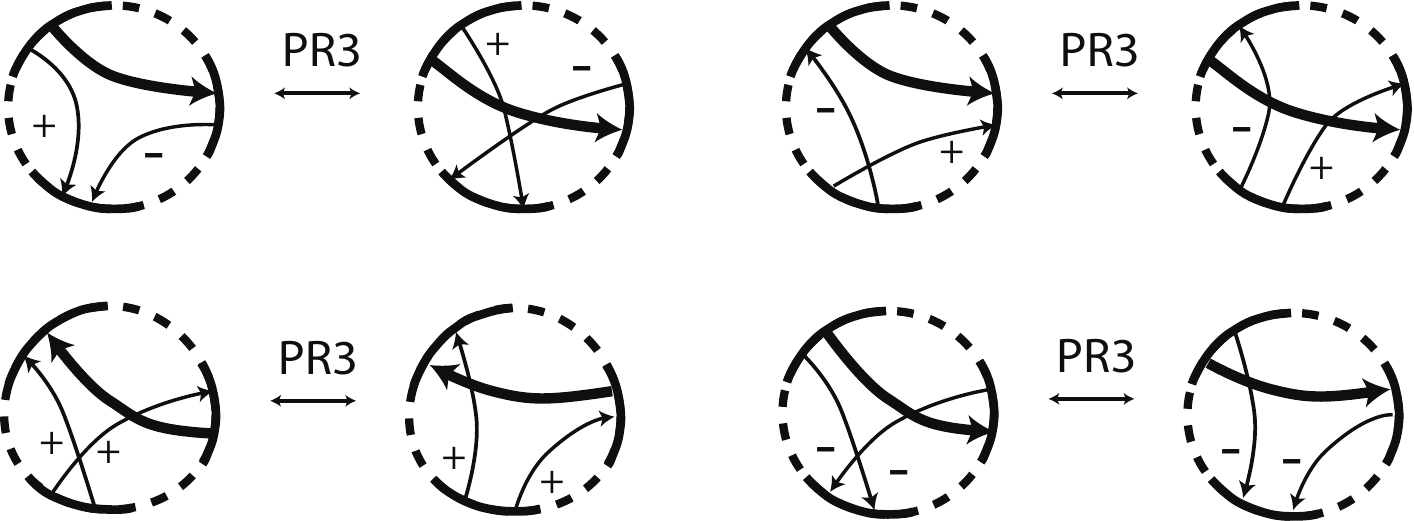}
\caption{{\bf Gauss diagrams of the pseudo-Reidemeister moves}}
\label{PR-Gauss}
\end{center}
\end{figure}

We arrive at a key observation at this point in the discussion. While every pseudoknot can be represented by this extended class of Gauss diagrams, not all Gauss diagrams are realizable as pseudoknots. In particular, there are many classical Gauss diagrams that fail to describe actual knots.  For ordinary Gauss diagrams, this observation prompted Kauffman to introduce the theory of virtual knots~\cite{vkt}. Here, we define the parallel theory of virtual pseudoknots. 

A {\em virtual pseudoknot} is an equivalence class of pseudo-Gauss diagrams, with equivalence generated by the set of Gauss-diagrammatic Reidemeister moves. Just as with ordinary virtual knots, virtual pseudoknots can be represented as pseudodiagrams where a new type of virtual crossing is allowed. (Virtual pseudodiagrams were studied extensively in~\cite{SMALL}.) We note that equivalent virtual pseudoknot diagrams are related by classical and pseudo-Reidemeister moves as well as the {\em virtual detour move}. The virtual detour move allows the replacement of any strand in the diagram that contains only virtual crossings by any other strand starting and ending at the same points that also only contains virtual crossings. Note that the virtual detour move has no effect on the Gauss diagram of the virtual pseudoknot since virtual crossings do not appear in these diagrams.


\section{An Isotopy Invariant}
We are now armed with the background needed to define our primary invariant, a powerful tool for distinguishing virtual pseudoknots in general and classical pseudoknots in particular.

\begin{definition}
Consider a Gauss diagram, $P$, of a virtual pseudoknot. Define a map $\mathcal{I}(P)$ as follows.
\begin{enumerate}
\item Replace with chords all arrows in $P$ that are associated to precrossings. (I.e., delete all arrowheads on precrossing arrows.) These chords will be called {\em prechords}.
\item Decorate each prechord $c$ with the integer value $i(c)$, where $i(c)$ is the sum of the signs of the classical arrows that intersect $c$.
\item Delete all classical arrows.
\item Delete any prechords $c$ that have adjacent endpoints and $i(c)=0$.
\end{enumerate}
The codomain of $\mathcal{I}$ is the set of all chord diagrams such that each chord is decorated with an integer. We refer to this set as $\mathbb{Z}\mathcal{C}$.
\end{definition}

To illustrate this definition, we include an example of a virtual pseudoknot $P$ and its corresponding decorated chord diagram $\mathcal{I}(P)$ in Figure~\ref{I_example}.

\begin{figure}[htbp]
\begin{center}
\includegraphics[scale=.35]{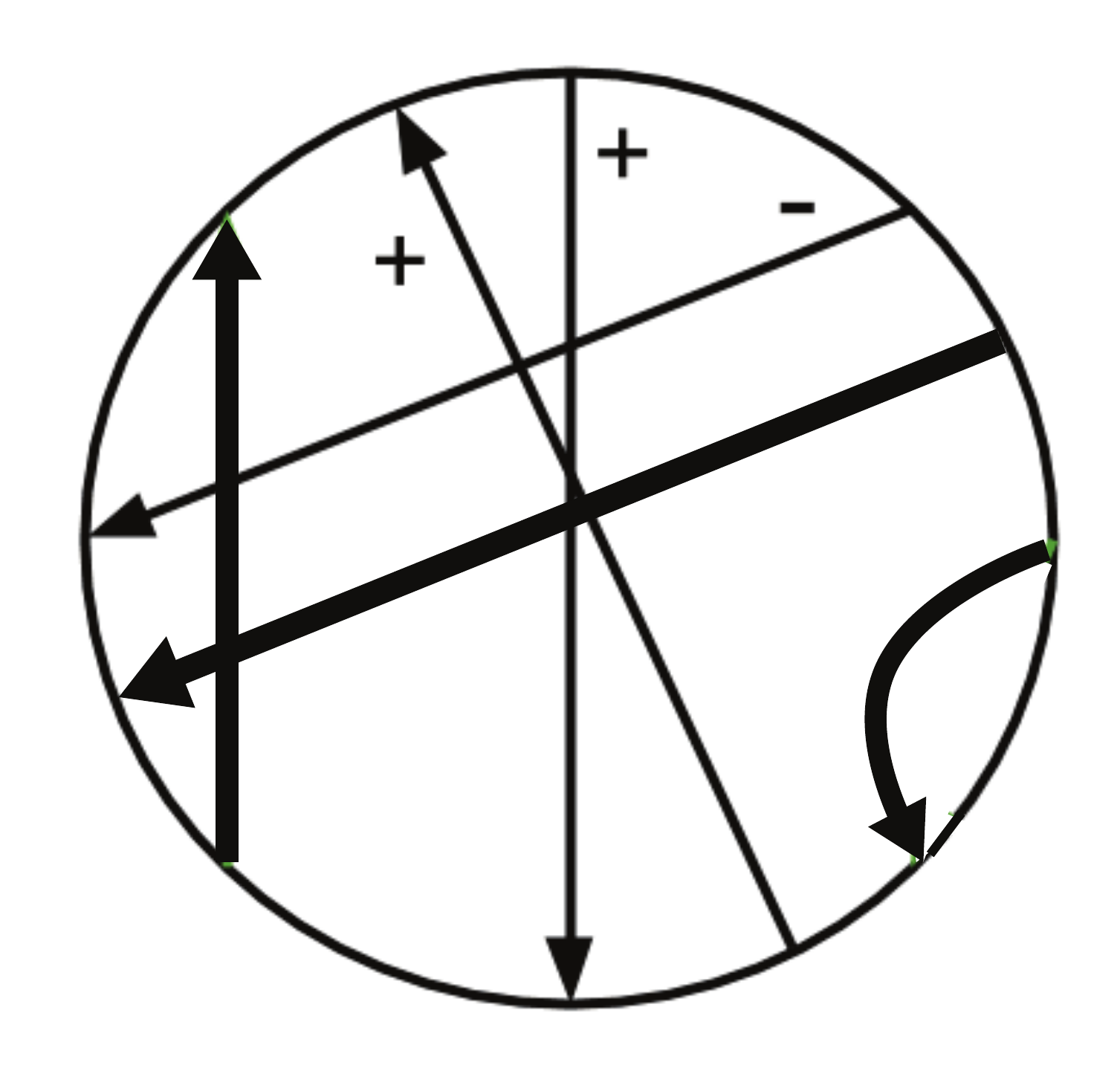}\hspace{1cm}\includegraphics[scale=.35]{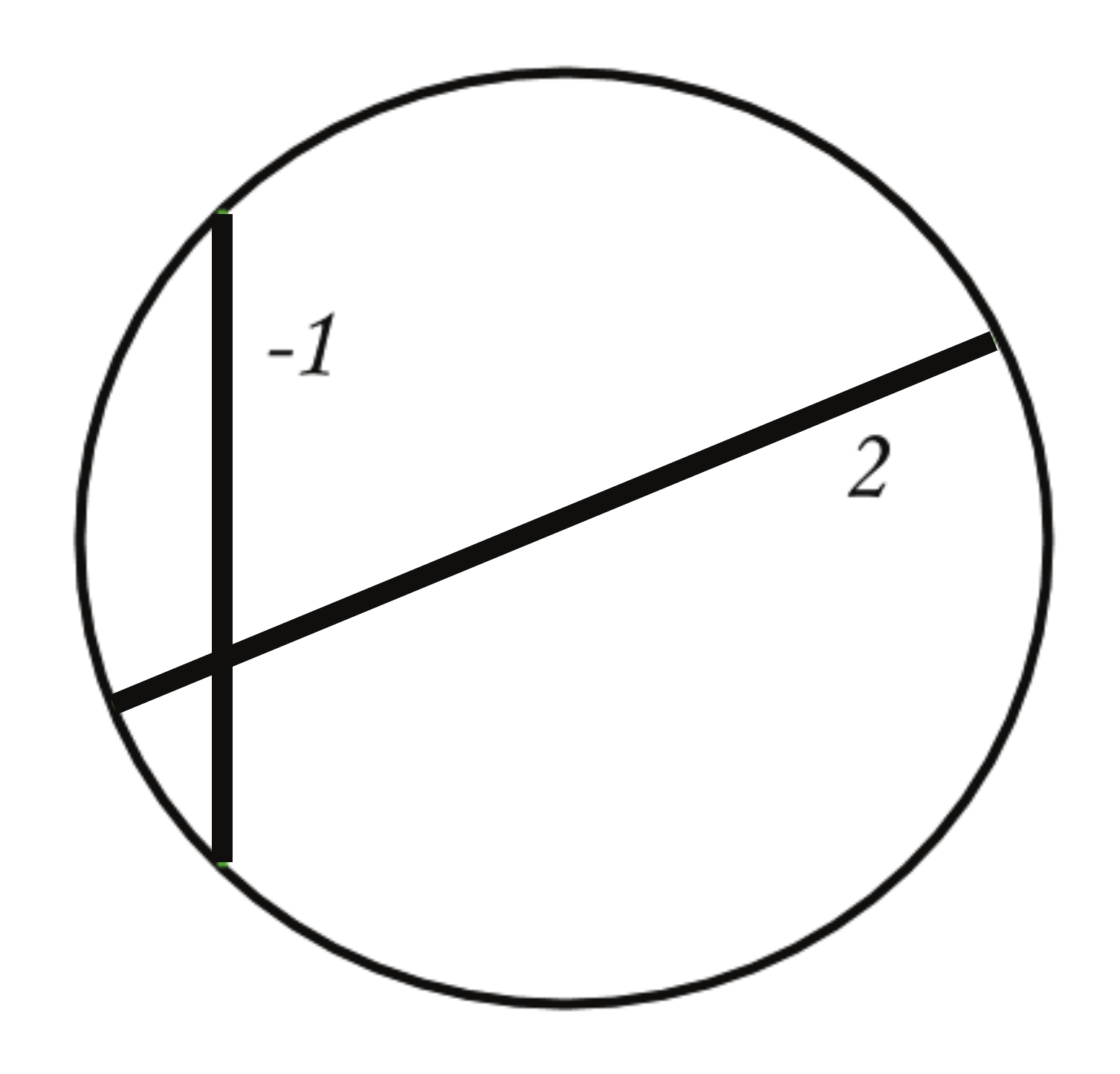}
\caption{{\bf A Gauss diagram for a virtual pseudoknot and its image under the map $\mathcal{I}$}}
\label{I_example}
\end{center}
\end{figure}

\begin{theorem} The map $\mathcal{I}$ is an invariant of virtual pseudoknots.
\end{theorem}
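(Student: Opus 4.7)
The plan is to verify, move by move, that $\mathcal{I}(P)$ is unchanged when $P$ is modified by any of the Gauss-diagrammatic Reidemeister moves shown in Figures~\ref{cR-Gauss} and~\ref{PR-Gauss} (R1, R2, R3, PR1, PR2, and the eight PR3 variants); since these moves generate the equivalence relation defining virtual pseudoknots, this suffices. The central observation is that $\mathcal{I}(P)$ depends only on (i) the combinatorial chord diagram on the prechords of $P$, modulo step~4's deletion rule, and (ii) for each remaining prechord $c$, the integer $i(c)$ recording the signed intersection count of $c$ with classical arrows. Because classical arrows are themselves deleted in step~3, they can only affect $\mathcal{I}(P)$ through their contribution to some $i(c)$.

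For R1 and PR1 the added chord has adjacent endpoints and hence crosses no other chord: in the R1 case it is classical and is deleted in step~3 without altering any decoration, while in the PR1 case it is a prechord with $i(c)=0$ and adjacent endpoints and is removed in step~4. For R2 the two added classical chords are parallel with opposite signs, and Polyak's dotted-arc convention (endpoints of all other chords lie on the dotted arcs) forces every external chord to cross both or neither of them; in either case the net contribution to each $i(c)$ is zero, and the two new classical chords are themselves deleted. The PR2 move is handled by an analogous parallel-chord argument combined with step~4: the contributions of the R2-like pair of new arrows to every external $i(c)$ cancel by the same dotted-arc reasoning, and any prechord produced in a configuration that makes it trivial is removed by step~4.

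For R3 and the PR3 variants three chords connect three solid arcs while all external chord endpoints lie on the three intervening dotted arcs. Whether an external chord crosses any one of the three moving chords is determined purely by which pair of dotted arcs contains its endpoints, and is therefore unaffected by shuffling endpoint positions within the solid arcs. Since the signs and the classical/prechord types of the three moving chords are preserved by each move, every $i$-decoration on an external prechord is preserved, and any classical chords among the three are deleted without further effect on $\mathcal{I}(P)$. The main obstacle is the finite case analysis for the eight PR3 variants (four drawn in Figure~\ref{PR-Gauss}, four more obtained by the sign/arrow symmetry described in the text): for each variant one must additionally check that the $i$-decorations of any prechords \emph{among} the three moving chords are preserved, which follows by inspecting the local pattern of crossings of the three chords with one another on both sides of the move.
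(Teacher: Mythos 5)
Your overall strategy coincides with the paper's: verify invariance of $\mathcal{I}$ move by move on the Gauss-diagrammatic generators, after observing that classical arrows can only influence $\mathcal{I}(P)$ through the decorations $i(c)$. Your treatments of R1, R2, R3, and PR1 are correct and essentially identical to the paper's arguments: the R1 chord is isolated and deleted in step 3; the R2 pair has opposite signs and crosses any given prechord both-or-neither, so every $i(c)$ survives; R3 preserves signs and the intersections of classical arrows with prechords; and the PR1 prechord has adjacent endpoints with $i(c)=0$, so step 4 removes it.

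The genuine gap is PR2. First, PR2 never ``produces'' a prechord --- precrossings are created or destroyed only by PR1 --- so your appeal to step 4 there is misdirected. More importantly, your cancellation argument addresses only prechords \emph{external} to the move, which is the easy part; the entire content of the PR2 case is the prechord \emph{involved} in the move. The classical crossings that PR2 manipulates sit adjacent to the precrossing itself, so their arrow endpoints interleave with the endpoints of that prechord, and the dotted-arc ``both-or-neither'' reasoning is simply unavailable for it; nor can you invoke R2-style sign cancellation, since the two classical crossings flanking a precrossing involve the two distinct branches of the double point and need not carry opposite signs (their signs depend on the branch orientations). What must be checked from the move's actual form (Figure~\ref{PR-Gauss}), and what the paper's proof checks, is that each PR2 variant preserves the signed count of classical-arrow intersections with the involved prechord, \emph{and} that this prechord's endpoints are adjacent after deleting classical arrows exactly when they were before the move, so that the step-4 deletion rule fires consistently on both sides --- a consistency point your proposal omits entirely. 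Secondarily, for PR3 you correctly reduce to the involved prechord but then defer the one nontrivial verification to ``inspection''; the paper records its outcome explicitly --- in every variant the involved prechord either gains a pair of opposite-sign classical intersections, loses such a pair, or trades one intersection for another of the same sign, leaving $i(c)$ fixed --- and without carrying out this finite check (and the PR2 one) the proof is not complete.
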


\begin{proof} Let us consider how each Gauss-diagrammatic Reidemeister move on a Gauss diagram $P$ affects the corresponding value of $\mathcal{I}$ in $\mathbb{Z}\mathcal{C}$. 

In terms of classical moves, R1 adds or deletes a classical arrow that doesn't intersect any precrossing arrow, so R1 doesn't change the value of $\mathcal{I}$. R2 adds or deletes two classical arrows such that, for any given precrossing arrow $a$, the two classical arrows either both intersect $a$ or both fail to intersect $a$. Since the two arrows introduced or deleted in an R2 move have opposite signs, the decoration on each prechord in $\mathcal{I}(P)$ is unchanged by this move. Classical R3 moves on Gauss diagrams do not change the signs of any classical arrows, nor do they change the intersection of a classical arrow with a precrossing arrow.

Let us turn our attention now to the effects of pseudo-Reidemeister moves on the Gauss diagram $P$. Note that the PR1 move adds or deletes a precrossing arrow with adjacent endpoints. In the computation of $\mathcal{I}$, the corresponding chord has $i(c)=0$. Hence, this chord is deleted in the process of computing $\mathcal{I}$. It follows that the values of $\mathcal{I}$ before and after a PR1 move are identical.

In a PR2 move, we notice that all intersections of arrows with the prechord are preserved, as are all signs of intersecting classical arrows. Thus, no PR2 move will alter the $i(c)$ value on this prechord. Furthermore, if the prechord involved in this move had adjacent endpoints in $\mathcal{I}$ before the PR2 move, it has adjacent endpoints in $\mathcal{I}$ after the PR2 move. In either situation, the prechord would be deleted if it has a value of $i(c)=0$ both before and after the PR2 move has been performed.

There are three types of changes that can occur to the precrossing arrow $a$ involved in a PR3 move. After the move is performed, either (a) $a$ intersects two more classical arrows that have opposite signs, (b) $a$ intersects two fewer classical arrows that have opposite signs, or (c) $a$ loses one classical arrow intersection but gains another classical arrow intersection where both arrows have the same sign. In all three cases, the value of $\mathcal{I}$ after the move is performed is the same as the value of $\mathcal{I}$ before the move. Hence, $\mathcal{I}$ is invariant under PR3.

Since we have verified invariance for R1, R2, R3, PR1, PR2, and PR3, we can conclude that $\mathcal{I}$ is indeed an invariant of virtual pseudoknots.
\end{proof}

\begin{cor} The map $\mathcal{I}$ is an invariant of (classical) pseudoknots.
\end{cor}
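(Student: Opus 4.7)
The plan is to deduce the corollary immediately from the theorem by observing that classical pseudoknots sit inside virtual pseudoknots in a way that respects the invariant $\mathcal{I}$. First I would note that every classical pseudoknot diagram is, in particular, a virtual pseudoknot diagram (namely one whose Gauss diagram happens to contain no virtual crossings—indeed, virtual crossings never appear in Gauss diagrams at all). Consequently the Gauss diagram associated to a classical pseudoknot and the Gauss diagram associated to the corresponding virtual pseudoknot coincide, and so the value $\mathcal{I}(P)$ computed under either interpretation is the same element of $\mathbb{Z}\mathcal{C}$.

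Next I would argue that equivalence of classical pseudoknots is a restriction of equivalence of virtual pseudoknots. The moves generating classical pseudoknot equivalence are exactly R1, R2, R3, PR1, PR2, and PR3, and these form a subset of the moves generating virtual pseudoknot equivalence (which additionally include the virtual detour move). Therefore, if two classical pseudoknot diagrams $P$ and $P'$ are related by a sequence of classical and pseudo-Reidemeister moves, they are in particular related by a sequence of Gauss-diagrammatic Reidemeister moves for virtual pseudoknots.

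The main (and only) step then is to invoke the theorem: since $\mathcal{I}$ is invariant under every Gauss-diagrammatic move used in the virtual theory, it is in particular invariant under the strictly smaller collection of moves used in the classical theory. Hence $\mathcal{I}(P) = \mathcal{I}(P')$ whenever $P$ and $P'$ represent the same classical pseudoknot, so $\mathcal{I}$ descends to a well-defined invariant of classical pseudoknots. There is no real obstacle here—the entire content of the corollary is the observation that classical pseudoknot equivalence is generated by a subfamily of the moves already handled in the proof of the theorem.
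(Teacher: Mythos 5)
Your argument is correct and is exactly the reasoning the paper leaves implicit: the corollary is stated without proof precisely because classical pseudoknot equivalence is generated by a subset (R1--R3, PR1--PR3, with no effect from planar or rigid vertex isotopy on the Gauss diagram) of the moves already handled in the theorem, so invariance restricts immediately. Nothing further is needed.
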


\section{A Homotopy Invariant}
 
At this point, we return to an interesting question that was posed for pseudoknots in~\cite{pseudoknot}. If we allow crossing changes in the classical crossings, to what extent do the precrossings obstruct the unknotting of a pseudoknot? We say that two (virtual) pseudoknots are {\em homotopic} or {\em crossing change equivalent} if any (virtual) pseudodiagrams of these two (virtual) pseudoknots are related by a sequence of PR-moves and crossing changes in the classical crossings. So our question can be reframed as follows: when are two (virtual) pseudoknots homotopic?

In terms of Gauss diagrams, the crossing change move changes both the direction of a classical arrow as well as its sign. In~\cite{pseudoknot}, it was proven that there exist nontrivial homotopy classes of pseudoknots. But how many homotopy classes exist? Using the invariant $\mathcal{I}$ as inspiration, we define a related homotopy invariant, $\mathcal{I}_h$, of virtual pseudoknots. This new invariant allows us to distinguish many pseudoknot and virtual pseudoknot homotopy classes. 

\begin{definition}
Consider a Gauss diagram, $P$, of a virtual pseudoknot. Define a map $\mathcal{I}_h(P)$ as follows.
\begin{enumerate}
\item Replace all arrows in $P$ that are associated to precrossings with prechords. 
\item Decorate each prechord $c$ with the $\mathbb{Z}_2$ value $p(c)$, where $p(c)$ is the parity of the number of classical arrows that intersect $c$.
\item Delete all classical arrows.
\item Delete any prechords $c$ that have adjacent endpoints and $p(c)=0$.
\end{enumerate}
The codomain of $\mathcal{I}_h$ is the set of all chord diagrams such that each chord is decorated with a 0 or 1. We refer to this set as $\mathbb{Z}_2\mathcal{C}$.
\end{definition}

Once again, we illustrate this definition with an example of a virtual pseudoknot $P$ and its corresponding decorated chord diagram $\mathcal{I}_h(P)$ in Figure~\ref{Ih_example}.

\begin{figure}[htbp]
\begin{center}
\includegraphics[scale=.35]{GenericEx-Bold-eps-converted-to.pdf}\hspace{1cm}\includegraphics[scale=.35]{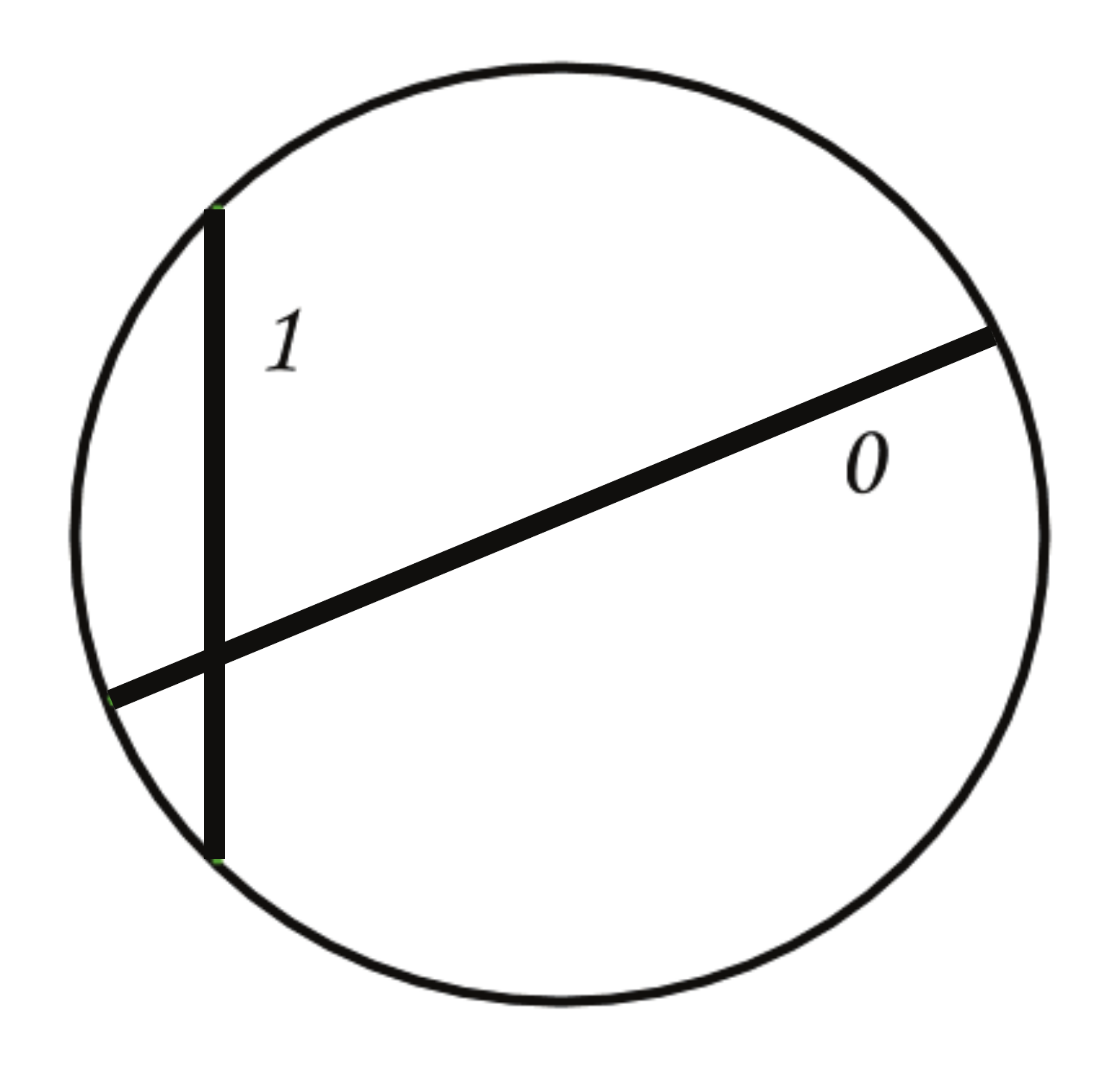}
\caption{{\bf A Gauss diagram for a virtual pseudoknot and its image under the map $\mathcal{I}_h$}}
\label{Ih_example}
\end{center}
\end{figure}

\begin{theorem} The map $\mathcal{I}_h$ is a homotopy invariant of virtual pseudoknots.
\end{theorem}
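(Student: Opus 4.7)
The plan is to reduce the theorem to two verifications: first, that $\mathcal{I}_h$ is invariant under the Gauss-diagrammatic Reidemeister and pseudo-Reidemeister moves; second, that $\mathcal{I}_h$ is invariant under a classical crossing change on a single classical arrow in the Gauss diagram. Together these imply homotopy invariance.

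For the first verification I would simply re-run the argument used for $\mathcal{I}$ in the previous theorem, replacing the integer decoration $i(c)$ by its mod-$2$ reduction $p(c)$. The moves R1, R3, PR1, and PR2 affect neither the number of classical arrows intersecting any given prechord nor the structure of prechords with adjacent endpoints, so the same reasoning applies without change. For R2, the two new (or removed) classical arrows have opposite signs and either both intersect or both fail to intersect each prechord $c$, so the intersection count changes by $0$ or $2$, which leaves $p(c)$ unchanged. For PR3, the three cases isolated in the previous proof produce a change in the intersection count of $+2$, $-2$, or $0$ respectively (case (c) trades one intersection for another), and each again preserves parity. Since no classical arrows are added, removed, or rerouted across prechords in a way that alters parities, no prechord newly acquires or loses the property of having adjacent endpoints with $p(c)=0$, so the final deletion step in the definition of $\mathcal{I}_h$ is also consistent across these moves.

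The new ingredient is invariance under a classical crossing change. In the Gauss diagram this move reverses the direction of a classical arrow and flips its sign, but leaves the underlying chord geometrically in place on the core circle. Consequently, the set of prechords that the arrow crosses is unchanged, so the intersection counts used to compute $p(c)$ are unchanged for every prechord $c$. Since the signs of classical arrows play no role whatsoever in the definition of $p(c)$, flipping the sign is irrelevant. Thus $\mathcal{I}_h(P)$ is unaffected by any classical crossing change.

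I do not expect any genuine obstacle: the first part is essentially the mod-$2$ shadow of the previous proof, and the second part uses that $p(c)$ depends only on an unsigned, unoriented incidence count. The only point to state carefully is the PR3 case, where one must enumerate the same three subcases as before and observe that each produces an even change (or no change) in the number of classical arrows crossing the precrossing arrow in question.
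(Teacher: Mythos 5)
Your proposal is correct and matches the paper's proof in substance: both reduce to checking that every classical and pseudo-Reidemeister move changes each prechord's count of intersecting classical chords by $0$ or $\pm 2$ (with PR1 handled by the deletion rule in step 4 of the definition), and that a crossing change leaves all chord endpoints in place so $p(c)$ is untouched. The only difference is packaging: the paper folds the crossing-change step in at the outset by declaring sign and arrow decorations on classical chords irrelevant and verifying invariance of $\mathcal{I}_h$ under the resulting decoration-free moves, which is logically equivalent to your two-step decomposition into decorated Reidemeister moves plus a separate crossing-change check.
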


\begin{proof} First, we recall that the crossing change move changes the sign and arrow direction of a classical chord in a Gauss diagram. So let us consider the Gauss-diagrammatic Reidemeister moves that we obtain if we ignore both sign and arrow decorations on all classical chords. We claim that $\mathcal{I}_h$ is invariant under this much more flexible set of moves.

We begin by noting that no classical Gauss-diagrammatic Reidemeister moves change the parity of the number of classical chords that intersect a given precrossing arrow. In particular, R1 and R3 do not change the number of intersecting classical chords, while R2 may change the number of intersecting classical chords by 2. Since the only classical information recorded in $\mathcal{I}_h$ is the parity of intersections with precrossing arrows, classical moves do not change the value of the function.

Next, we note that all PR2 moves preserve the number of classical chords that intersect a given precrossing arrow. On the other hand, one type of PR3 move preserves the number of classical chord intersections of the precrossing arrow involved in the move, while the other type changes the intersection number of this arrow by 2. In either case, parity is preserved. Condition 4 in the definition of $\mathcal{I}_h$ guarantees invariance under PR1. Hence, all PR moves on Gauss diagrams preserve $\mathcal{I}_h$. 
\end{proof}

\begin{cor} The map $\mathcal{I}_h$ is a homotopy invariant of (classical) pseudoknots.
\end{cor}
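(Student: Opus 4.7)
The plan is to derive the corollary as an immediate consequence of the theorem by embedding classical pseudoknot theory into virtual pseudoknot theory. First I would observe that every classical pseudoknot diagram yields a realizable Gauss diagram in the standard way (with classical arrows for classical crossings and prechords for precrossings), and that the definition of $\mathcal{I}_h$ applies to this Gauss diagram verbatim since no virtual crossings are ever recorded in the Gauss-diagrammatic setup.

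Next I would note that the equivalence relation defining classical pseudoknot homotopy is generated by the classical moves R1, R2, R3, the pseudo-Reidemeister moves PR1, PR2, PR3, and crossing changes at classical crossings. This is precisely a sub-relation of virtual pseudoknot homotopy, which is generated by the same set of moves together with the virtual detour move. Since the virtual detour move does not alter the underlying Gauss diagram, the two notions of homotopy coincide when restricted to Gauss diagrams coming from classical pseudodiagrams. Consequently, the preceding theorem, which established invariance of $\mathcal{I}_h$ under every Gauss-diagrammatic Reidemeister move as well as under arrow-direction and sign changes on classical chords, directly implies invariance under the (smaller) set of moves relevant here.

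There is no real obstacle in the argument; the only point worth verifying is that each of the moves in question sends a realizable Gauss diagram to a realizable Gauss diagram, but this is automatic because each move is a local modification of an actual classical pseudodiagram and therefore produces another classical pseudodiagram whose Gauss diagram is, by construction, realizable. Thus $\mathcal{I}_h$ descends to a well-defined map on homotopy classes of classical pseudoknots, proving the corollary.
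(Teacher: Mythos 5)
Your proposal is correct and matches the paper's (implicit) argument: the corollary follows immediately from the theorem because classical pseudoknot homotopy is generated by a subset of the moves under which $\mathcal{I}_h$ was just shown invariant, so no new work is required. One small caution: your claim that the two homotopy notions \emph{coincide} on classical diagrams is stronger than needed (and nontrivial, analogous to the faithfulness of the embedding of classical into virtual knots), but your proof only uses the easy direction --- classically homotopic implies virtually homotopic --- so the argument stands.
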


\begin{remark} We note that the class of (virtual) pseudoknots is a quotient of the class of (virtual) singular knots, so the invariants presented in this paper give rise to invariants of (virtual) singular knots as well. 
\end{remark}

\section{Unknotting Numbers and Pseudoknot Homotopy}

For classical knots, crossing change is an unknotting operation. One
of the most difficult problems in knot theory is the computation of
unknotting numbers. We review the classical unknotting theory, then extend these ideas to homotopically trivial pseudoknots. We also provide a table of unknotting numbers for homotopically trivial pseudoknots and homotopy classes for homotopically nontrivial pseudoknots.

\subsection{Unknotting Classical Knots}

\begin{definition}
The {\it unknotting number} $u(K)$ of a knot $K$ is the minimal
number of crossing changes required to obtain the unknot from the
knot $K$, where the minimum is taken over all diagrams of $K$.
\end{definition}

There are two equivalent approaches for obtaining the
unknotting number of a knot $K$:
\begin{enumerate}
\item According to the {\it classical definition}, one
is allowed to make a planar isotopy after each crossing change and
then continue the unknotting process with the newly obtained
projection, until the unknot is obtained.
\item The {\it standard definition} requires all crossing changes
to be done simultaneously in a fixed projection.
\end{enumerate}

Unfortunately, both definitions are unsuitable for calculations,
since there are infinitely many projections of any knot. From the
well known example of the knot $10_8$ (or $5\,1\,4$ in Conway
notation), given by Y.~Nakanishi ~\cite{nakanishi} and S.~Bleiler
~\cite{bleiler}, the unknotting number is not always realizable in a minimal crossing projection. 
We recall that the rational knot $5\,1\,4$ has only one minimal projection
(Figure~\ref{NakBl}a). In the minimal projection diagram of $5\,1\,4$, unknotting requires at least three crossing changes (in the crossings denoted by circles).

\begin{figure}[htbp]
\begin{center}
\includegraphics[scale=0.5]{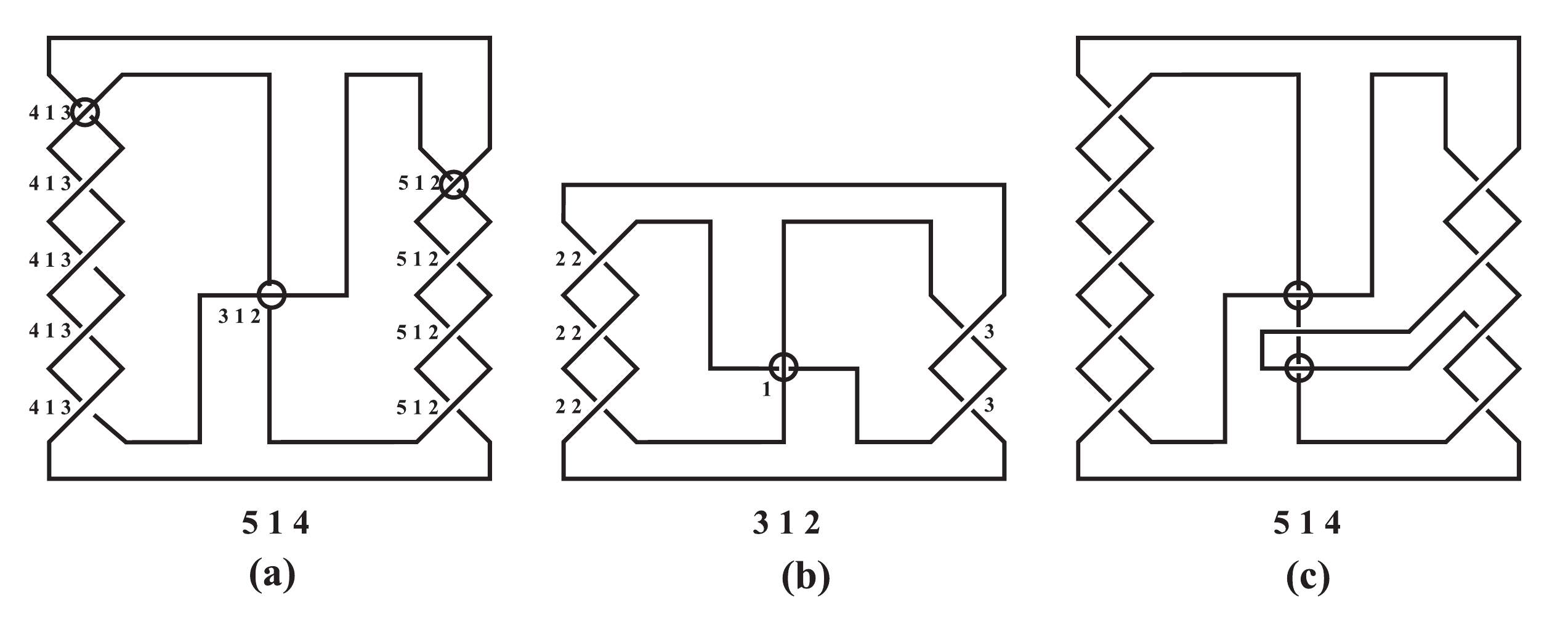}
\caption{{\bf The Nakanishi-Bleiler example: (a) the minimal
projection of the knot $5\,1\,4$ that requires at least three
crossing changes to be unknotted; (b) the minimal projection of the
knot $3\,1\,2$ with the unknotting number 1; (c) non-minimal
projection of the knot $5\,1\,4$ from which we obtain the correct
unknotting number $u(5\,1\,4)=2.$}} \label{NakBl}
\end{center}
\end{figure}

On the other hand, making a crossing change in the middle point of
the diagram (Figure~\ref{NakBl}b) followed by the reduction
$5\,-1\,4=3\,1\,2$, we obtain the minimal projection of the knot
$3\,1\,2$. This projection can be unknotted by a single crossing change. Hence, in
the case of unknotting according the classical definition, we obtain the correct unknotting number of 2 using only minimal projections. The unknotting number can also be obtained from the non-minimal
projection of the knot $5\,1\,4$ (Figure~\ref{NakBl}c) using the
standard definition.

The Nakanishi-Bleiler example motivated the definition of the
$JB$-unknotting number which is easy to compute due to the algorithmic
nature of its definition.

\begin{definition}
For a given crossing $v$ of a diagram $D$ representing knot $K$, let
$D_v$ denote the knot diagram obtained from $D$ by changing
the crossing $v$.

\begin{itemize}
    \item [a)] The {\it unknotting number} $\hat{u}(D)$ of a knot diagram $D$
is the minimum number of crossing changes required in the diagram to
obtain the unknot.
        \item [b)] The {\it $JB$-unknotting number} $u_{JB}(D)$ of a diagram $D$
is defined recursively in the following manner:
\begin{enumerate}
    \item $u_{JB}(D)=0$ if and only if $D$ represents the unknot.
    \item $ u_{JB}(D)= 1+\displaystyle{\min_{D_v}} \ u_{JB}(D_v)$ where
     the minimum is taken over all minimal diagrams of the knot $K$
    represented by $D_v$.
\end{enumerate}
    \item [c)]The {\it $JB$-unknotting number} $u_{JB}(K)$ of a knot $K$
$u_{JB}(K)= \displaystyle{\min_D} \ u_{JB}(D)$
    where the minimum is taken over all minimal diagrams $D$ representing $K$.
  \end{itemize}

\end{definition}

J.A.~Bernhard \cite{bernhard} in 1994 and independently S.~Jablan
\cite{JS,4} in 1995 conjectured:

\begin{conjecture}
{\bf (Bernhard-Jablan Conjecture)} For every knot $K$ we have that
$u(K)=u_{JB}(K).$
\end{conjecture}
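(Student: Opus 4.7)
The conjecture asserts the equality $u(K) = u_{JB}(K)$ for every knot $K$. One direction, namely $u(K) \leq u_{JB}(K)$, is essentially formal: every procedure that realizes $u_{JB}(K)$ gives, by concatenating the crossing changes with the implicit planar isotopies needed to pass to each successive minimal diagram, a valid unknotting sequence in the sense of the classical definition. So the counting bound is automatic, and the entire content of the conjecture lies in the reverse inequality $u_{JB}(K) \leq u(K)$.

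The plan is to proceed by strong induction on $u(K)$. The base case $u(K)=0$ forces $K$ to be the unknot, whose minimal diagram has no crossings, and $u_{JB}(K)=0$ trivially. For the inductive step, assume the conjecture holds for all knots of unknotting number strictly less than $n$, and let $u(K)=n$. It would suffice to exhibit a minimal diagram $D$ of $K$ together with a crossing $v$ of $D$ such that the knot $K'$ represented by $D_v$ has $u(K') = n-1$. Then by the induction hypothesis $u_{JB}(K') = n-1$, and by the recursive definition $u_{JB}(D) \leq 1 + u_{JB}(K') = n$, which yields $u_{JB}(K) \leq n$.

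The main obstacle is the structural claim that every nontrivial knot admits \emph{some} minimal diagram containing a crossing whose change drops the unknotting number by exactly one. The Nakanishi--Bleiler example cautions that this property can fail for a particular minimal diagram, but the conjecture only requires existence of one such diagram. To attack this, I would start with a (possibly non-minimal) diagram $D'$ realizing an optimal unknotting sequence and try to carry the first ``unknotting crossing'' through a controlled sequence of Reidemeister simplifications down to a minimal diagram, tracking the crossing under each reduction. The delicate point is that Reidemeister moves can absorb or redistribute the targeted crossing; overcoming this will likely require a tangle-theoretic analysis, supplemented by numerical invariants (signature, determinant, concordance invariants such as $\tau$ or $s$) that certify the unknotting number drop survives the simplification.

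Because this conjecture has resisted proof since the mid-1990s, I would expect that before pursuing a full proof one should first test the strengthened structural claim computationally on broad knot families --- rational knots, Montesinos knots, and low-crossing hyperbolic knots --- to look either for a potential counterexample or for invariants that reliably witness the required crossing. A promising intermediate target would be to establish the conjecture for classes where every minimal diagram is unique up to flypes, so that the space of minimal diagrams is controlled enough to carry out the inductive step explicitly.
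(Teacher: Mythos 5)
This statement is presented in the paper as an open conjecture, not a theorem: the paper offers no proof, and in fact explicitly hedges against its failure (``even if the Bernhard-Jablan Conjecture is false, $u_{JB}(K)$ is the best known \ldots bound for unknotting numbers''). So there is no paper proof to compare against, and any purported proof should be judged on its own merits. On those merits, your proposal is not a proof. Your two correct contributions are (i) the formal direction $u(K)\le u_{JB}(K)$, which is indeed immediate because every $JB$-unknotting sequence is a legal unknotting sequence under the classical definition (this shows $u_{JB}$ is an \emph{upper} bound for $u$, which is the standard reading, the paper's parenthetical about a ``lower bound'' notwithstanding), and (ii) the observation that, by induction, the conjecture reduces to the structural claim that every knot with $u(K)=n>0$ has \emph{some} minimal diagram containing a crossing whose change produces a knot of unknotting number exactly $n-1$.

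The gap is that this structural claim is not a lemma on the way to the conjecture --- it is equivalent to the conjecture, and it is precisely what has been open since Bernhard (1994) and Jablan (1995). Your induction repackages the problem without reducing its difficulty. The specific bridge you sketch --- carrying the first unknotting crossing of an optimal (possibly non-minimal) diagram through Reidemeister simplifications down to a minimal diagram --- is exactly where known attempts fail: Reidemeister moves do not preserve any usable identity of an individual crossing (R1 and R2 destroy crossings outright, R3 permutes strands through them), minimal diagrams are not unique, and the Nakanishi--Bleiler example $5\,1\,4$ shows that the unknotting crossing of a good diagram can have no counterpart in a given minimal diagram. The invariants you invoke (signature, determinant, $\tau$, $s$) bound $u(K)$ from below but say nothing about which crossing in which minimal diagram realizes a drop, so they cannot certify the claim you need. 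Your closing paragraph concedes all of this, which is honest, but it confirms that what you have written is a research program with a correctly identified open core, not a proof of the statement.
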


This means that we take all ($n$-crossing) minimal projections of a knot, make a single
crossing change in every crossing to obtain $n$ new knot diagrams, and then minimize all the
projections obtained. The same algorithm is applied to the first,
second, $\ldots$ $k^\text {th}$ generation of the knots obtained.
The $JB$-unknotting number is the minimum number of steps $k$ in this
recursive unknotting process. {\em Note that even if the Bernhard-Jablan Conjecture is false,
$u_{JB}(K)$ is the best known lower bound for unknotting numbers.}

\subsection{Unknotting Homotopically Trivial Pseudoknots}

The Nakanishi-Bleiler example can be directly transferred to
pseudoknots. Consider the motivating example of the pseudoknot
$(i,1,1,1,1)\,1\,(1,1,1,1)$, i.e., the pseudoknot derived from the
knot $10_8$ (or $5\,1\,4$), where one crossing in the first twist 5
is replaced by a precrossing. (See~\cite{color} for more on the Conway notation of pseudoknots.)  
Just as with our original
knot $5\,1\,4$, the fixed diagram $(i,1,1,1,1)\,1\,(1,1,1,1)$
requires at least three crossing changes to be unknotted. On the
other hand, the diagram $(i,1,1,1,1)\,-1\,(1,1,1,1)$ obtained by one
crossing change reduces to $(i,1,1)\,1\,(1,1)$, and the next
crossing change $(i,1,1)\,-1\,(1,1)$ yields the unknot.

We conclude our investigation with the following tables in which we provide JB-unknotting numbers for homotopically trivial pseudoknots. The notation used is consistent with the pseudoknot
tables that can be downloaded from the address:

\bigskip

 {\tt http://www.mi.sanu.ac.rs/vismath/pseudotabsigned1.pdf}

\bigskip

\noindent In the tables referenced above, pseudoknots are given by their ordering
numbers, Conway symbols, and signed WeRe-sets. 

In the case of classical knots, there is only one homotopy class:
the homotopy class of the unknot. Thus, unknotting numbers are always finite. Since there exist
nontrivial homotopy classes of pseudoknots, there are pseudoknots
with finite and infinite unknotting numbers in the table below. For the pseudoknots with
at most 7 crossings and finite $u_{JB}$, we provide their
$JB$-unknotting numbers in Table 1. For the remaining pseudoknots from the
tables we determined their homotopy classes.

\begin{table}[h!]
\begin{center}
\begin{tabular}{cc}
\begin{tabular}{|c|c|c|}  \hline
$K$ & Conway notation & $u_{JB}$\\ \hline \hline
$3_1.3$&$(i,1,1)$&1\\ \hline

$4_1.3$&$(i,i)\,(1,1)$&1\\ \hline

$4_1.5$&$(i,1)\,(1,1)$&1\\ \hline

$5_1.5$&$(i,1,1,1,1)$&2\\ \hline

$5_2.4$&$(i,i,i)\,(1,1)$&1\\ \hline

$5_2.6$&$(i,i,1)\,(1,1)$&1\\ \hline

$5_2.9$&$(i,1,1)\,(1,1)$&1\\ \hline

$5_2.11$&$(1,1,1)\,(i,1)$&2\\ \hline

$6_1.4$&$(i,i,i,i)\,(1,1)$&1\\ \hline

$6_1.6$&$(i,i,i,1)\,(1,1)$&1\\ \hline

$6_1.9$&$(i,i,1,1)\,(1,1)$&1\\ \hline

$6_1.12$&$(i,1,1,1)\,(1,1)$&1\\ \hline

$6_1.13$&$(1,1,1,1)\,(i,i)$&2\\ \hline

$6_1.14$&$(1,1,1,1)\,(i,1)$&2\\ \hline

$6_2.17$&$(i,1,1)\,(i)\,(1,1)$&2\\ \hline

$6_2.18$&$(i,1,1)\,(i)\,(-1,-1)$&2\\ \hline

$6_2.21$&$(i,1,1)\,(1)\,(1,1)$&1\\ \hline

$6_2.25$&$(1,1,1)\,(i)\,(1,1)$&2\\ \hline

$6_2.26$&$(1,1,1)\,(i)\,(-1,-1)$&2\\ \hline

$6_2.27$&$(1,1,1)\,(1)\,(i,i)$&2\\ \hline

$6_2.28$&$(1,1,1)\,(1)\,(i,1)$&2\\ \hline

$6_3.17$&$(i,1)\,(1)\,(i)\,(1,1)$&2\\ \hline

$6_3.18$&$(i,1)\,(1)\,(1)\,(1,1)$&1\\ \hline

$6_3.19$&$(1,1)\,(i)\,(i)\,(1,1)$&2\\ \hline

$6_3.20$&$(1,1)\,(i)\,(i)\,(-1,-1)$&2\\ \hline

$6_3.21$&$(1,1)\,(i)\,(1)\,(1,1)$&2\\ \hline

$7_1.7$&$(i,1,1,1,1,1,1)$&3\\ \hline

$7_2.4$&$(i,i,i,i,i)\,(1,1)$&1\\ \hline

$7_2.6$&$(i,i,i,i,1)\,(1,1)$&1\\ \hline

$7_2.9$&$(i,i,i,1,1)\,(1,1)$&1\\ \hline

$7_2.12$&$(i,i,1,1,1)\,(1,1)$&1\\ \hline

$7_2.15$&$(i,1,1,1,1)\,(1,1)$&1\\ \hline

$7_2.17$&$(1,1,1,1,1)\,(i,1)$&3\\ \hline

$7_3.16$&$(i,1,1,1)\,(1,1,1)$&3\\ \hline

$7_3.17$&$(1,1,1,1)\,(i,i,i)$&2\\ \hline

$7_3.18$&$(1,1,1,1)\,(i,i,1)$&2\\ \hline

$7_3.18$&$(1,1,1,1)\,(i,1,1)$&2\\ \hline

$7_4.8$&$(i,i,i)\,(1)\,(1,1,1)$&2\\ \hline

$7_4.16$&$(i,i,1)\,(1)\,(1,1,1)$&2\\ \hline

\end{tabular}
&
\begin{tabular}{|c|c|c|} \hline
  $K$ & Conway notation & $u_{JB}$\\ \hline \hline 

$7_4.22$&$(i,1,1)\,(1)\,(1,1,1)$&2\\ \hline

$7_4.23$&$(1,1,1)\,(i)\,(1,1,1)$&3\\ \hline

$7_4.24$&$(1,1,1)\,(i)\,(-1,-1,-1)$&2\\ \hline

$7_5.24$&$(i,1,1)\,(i,i)\,(1,1)$&2\\ \hline

$7_5.25$&$(i,1,1)\,(i,i)\,(-1,-1)$&2\\ \hline

$7_5.28$&$(i,1,1)\,(i,1)\,(1,1)$&2\\ \hline

$7_5.31$&$(i,1,1)\,(1,1)\,(1,1)$&2\\ \hline

$7_5.35$&$(1,1,1)\,(i,i)\,(1,1)$&2\\ \hline

$7_5.36$&$(1,1,1)\,(i,i)\,(-1,-1)$&2\\ \hline

$7_5.39$&$(1,1,1)\,(i,1)\,(1,1)$&2\\ \hline

$7_5.41$&$(1,1,1)\,(1,1)\,(i,1)$&3\\ \hline

$7_6.18$&$(i,i)\,(1,1)\,(i)\,(1,1)$&2\\ \hline

$7_6.19$&$(i,i)\,(1,1)\,(i)\,(-1,-1)$&2\\ \hline

$7_6.22$&$(i,i)\,(1,1)\,(1)\,(1,1)$&1\\ \hline

$7_6.41$&$(i,1)\,(1,1)\,(i)\,(1,1)$&2\\ \hline

$7_6.42$&$(i,1)\,(1,1)\,(i)\,(-1,-1)$&2\\ \hline

$7_6.45$&$(i,1)\,(1,1)\,(1)\,(1,1)$&1\\ \hline

$7_6.49$&$(1,1)\,(i,i)\,(i)\,(1,1)$&2\\ \hline

$7_6.50$&$(1,1)\,(i,i)\,(i)\,(-1,-1)$&2\\ \hline

$7_6.52$&$(1,1)\,(i,i)\,(1)\,(i,1)$&2\\ \hline

$7_6.53$&$(1,1)\,(i,i)\,(1)\,(1,1)$&2\\ \hline

$7_6.57$&$(1,1)\,(i,1)\,(i)\,(1,1)$&2\\ \hline

$7_6.58$&$(1,1)\,(i,1)\,(i)\,(-1,-1)$&2\\ \hline

$7_6.60$&$(1,1)\,(i,1)\,(1)\,(i,1)$&2\\ \hline

$7_6.61$&$(1,1)\,(i,1)\,(1)\,(1,1)$&2\\ \hline

$7_6.65$&$(1,1)\,(1,1)\,(i)\,(1,1)$&2\\ \hline

$7_6.66$&$(1,1)\,(1,1)\,(i)\,(-1,-1)$&2\\ \hline

$7_6.68$&$(1,1)\,(1,1)\,(1)\,(i,1)$&2\\ \hline

$7_7.19$&$(i,i)\,(1)\,(1)\,(i)\,(1,1)$&2\\ \hline

$7_7.21$&$(i,i)\,(1)\,(1)\,(1)\,(1,1)$&1\\ \hline

$7_7.37$&$(i,1)\,(1)\,(1)\,(i)\,(1,1)$&2\\ \hline

$7_7.38$&$(i,1)\,(1)\,(1)\,(1)\,(1,1)$&1\\ \hline

$7_7.39$&$(1,1)\,(i)\,(i)\,(i)\,(1,1)$&2\\ \hline

$7_7.40$&$(1,1)\,(i)\,(i)\,(i)\,(-1,-1)$&2\\ \hline

$7_7.41$&$(1,1)\,(i)\,(i)\,(1)\,(1,1)$&2\\ \hline

$7_7.42$&$(1,1)\,(i)\,(i)\,(-1)\,(-1,-1)$&2\\ \hline

$7_7.43$&$(1,1)\,(i)\,(1)\,(i)\,(1,1)$&2\\ \hline

$7_7.44$&$(1,1)\,(i)\,(1)\,(1)\,(1,1)$&2\\ \hline

$7_7.45$&$(1,1)\,(1)\,(i)\,(1)\,(1,1)$&2\\ \hline
\end{tabular} \end{tabular}
\end{center}\caption{$u_{JB}$ numbers for homotopically trivial pseudoknots}
\end{table}

\subsection{Homotopy Classes of Pseudoknots}

All pseudoknots with at most 7 crossings are divided into 53
homotopy classes. The first pseudoknot in the list that has a new homotopy class is taken to be the representative of the
class. The first 26 classes consist from more than one pseudoknot,
and the remaining 27 classes contain only one pseudoknot with at most 7 crossings.  Their
list is the following:

\bigskip

\noindent 1) $3_1.1=(i,i,i)$, $5_1.3$, $5_2.7$, $6_2.5$, $6_2.7$,
$6_3.4$, $6_3.6$, $7_1.5$, $7_2.13$, $7_3.11$, $7_3.8$, $7_4.17$,
$7_4.18$, $7_5.10$, $7_5.19$, $7_5.29$, $7_5.37$, $7_5.5$, $7_5.7$,
$7_6.43$, $7_6.46$, $7_6.54$, $7_6.62$;

\bigskip

\noindent 2) $3_1.2=(i,i,1)$, $4_1.4$, $5_1.4$, $5_2.10$, $5_2.8$,
$6_1.11$, $6_2.10$, $6_2.11$, $6_2.13$, $6_2.20$, $6_2.23$,
$6_2.24$, $6_3.10$, $6_3.13$, $6_3.14$, $6_3.16$, $6_3.8$, $7_1.6$,
$7_2.14$, $7_2.16$, $7_3.12$, $7_3.15$, $7_4.19$, $7_4.20$,
$7_4.21$, $7_5.13$, $7_5.14$, $7_5.17$, $7_5.20$, $7_5.30$,
$7_5.38$, $7_5.40$, $7_6.33$, $7_6.34$, $7_6.37$, $7_6.44$,
$7_6.47$, $7_6.48$, $7_6.51$, $7_6.55$, $7_6.56$, $7_6.59$,
$7_6.63$, $7_6.64$, $7_6.67$, $7_7.31$, $7_7.33$, $7_7.34$,
$7_7.35$, $7_7.36$;

\bigskip

\noindent 3) $4_1.1=(i,i)\,(i,i)$, $6_1.7$, $6_2.14$, $7_4.14$,
$7_6.6$, $7_6.8$, $7_7.20$, $7_7.5$, $7_7.7$;

\bigskip

\noindent 4) $4_1.2=(i,i)\,(i,1)$, $5_2.5$, $6_1.10$, $6_1.8$,
$6_2.15$, $6_2.16$, $6_2.19$, $6_2.22$, $6_3.15$, $7_2.11$,
$7_3.14$, $7_4.12$, $7_4.13$, $7_4.15$, $7_5.27$, $7_5.33$,
$7_5.34$, $7_6.11$, $7_6.12$, $7_6.14$, $7_6.21$, $7_6.25$,
$7_6.26$, $7_6.29$, $7_6.36$, $7_6.39$, $7_6.40$, $7_7.11$,
$7_7.14$, $7_7.15$, $7_7.18$, $7_7.24$, $7_7.25$, $7_7.27$,
$7_7.29$, $7_7.32$, $7_7.9$;

\bigskip

\noindent 5) $5_1.1=(i,i,i,i,i)$, $7_1.3$, $7_3.4$, $7_5.8$;

\bigskip

\noindent 6)  $5_1.2=(i,i,i,i,1)$, $6_2.6$, $6_3.9$, $7_1.4$,
$7_3.5$, $7_3.7$, $7_5.18$, $7_5.9$;

\bigskip

\noindent 7)  $5_2.1=(i,i,i)\,(i,i)$, $7_2.7$, $7_3.9$, $7_4.4$,
$7_5.21$, $7_6.15$;

\bigskip

\noindent 8)  $5_2.2= (i,i,i)\,(i,1)$, $6_1.5$, $7_2.8$, $7_3.13$,
$7_4.5$, $7_4.7$, $7_5.22$, $7_5.23$, $7_6.16$, $7_6.17$, $7_6.28$;

\bigskip

\noindent  9) $5_2.3= (i,i,1)\,(i,i)$, $6_2.12$, $6_3.5$, $7_2.10$,
$7_3.10$, $7_4.10$, $7_4.11$, $7_5.26$, $7_5.32$, $7_6.20$,
$7_6.38$, $7_7.17$;

\bigskip

\noindent 10)  $6_1.3= (i,i,i,1)\,(i,i)$, $7_4.6$, $7_6.7$,
$7_7.10$;

\bigskip

\noindent 11)  $6_1.2= (i,i,i,i)\,(i,1)$, $7_2.5$;

\bigskip

\noindent 12) $6_2.2= (i,i,i)\,(i)\,(i,1)$, $7_5.6$;

\bigskip

\noindent 13)  $6_2.3= (i,i,i)\,(1)\,(i,i)$, $7_3.6$;

\bigskip

\noindent 14)  $6_2.4= (i,i,1)\,(i)\,(i,i)$, $7_6.13$, $7_7.6$;

\bigskip

\noindent 15)  $6_2.8= (i,i,1) (i) (i,1)$, $6_2.9$, $7_5.16$,
$7_7.26$, $7_7.28$;

\bigskip

\noindent 16)  $6_3.2= (i,i)\,(i)\,(i)\,(i,1)$, $7_6.30$;

\bigskip

\noindent 17)  $6_3.3= (i,i)\,(i)\,(1)\,(i,i)$, $7_5.15$;

\bigskip

\noindent 18)  $6_3.7= (i,i)\,(1)\,(i)\,(i,1)$, $7_6.35$

\bigskip

\noindent 19)  $6_3.11= (i,1)\,(i)\,(i)\,(i,1)$, $6_3.12$, $7_6.31$,
$7_6.32$, $7_7.30$;

\bigskip

\noindent 20)  $7_4.10= (i,i,1)\,(i)\,(i,i,1)$, $7_4.9$, $7_7.16$;

\bigskip

\noindent 21) $7_5.9=(i,i,i)\,(1,1)\,(i,1)$, $7_5.10$, $7_6.24$;

\bigskip

\noindent 22)  $7_5.11= (i,i,1)\,(i,i)\,(i,1)$, $7_5.12$, $7_6.27$;

\bigskip

\noindent 23)  $7_6.10= (i,i)\,(i,1)\,(i)\,(i,1)$, $7_6.9$, $7_7.8$;

\bigskip

\noindent 24)  $7_6.23= (i,1)\,(i,i)\,(i)\,(i,1)$, $7_6.24$;

\bigskip

\noindent 25)  $7_7.12= (i,i)\,(1)\,(i)\,(i)\,(i,1)$, $7_7.13$;

\bigskip

\noindent 26)  $7_7.22= (i,1)\,(i)\,(i)\,(i)\,(i,1)$, $7_7.23$;

\bigskip

\noindent 27)-53)

$6_1.1=(i,i,i,i)\,(i,i)$,

$6_2.1=(i,i,i)\,(i)\,(i,i)$,

$6_3.1=(i,i)\,(i)\,(i)\,(i,i)$,

$7_1.1=(i,i,i,i,i,i,i)$,

$7_1.2=(i,i,i,i,i,i,1)$,

$7_2.1=(i,i,i,i,i)\,(i,i)$,

$7_2.2=(i,i,i,i,i)\,(i,1)$,

$7_2.3=(i,i,i,i,1)\,(i,i)$,

$7_3.1=(i,i,i,i)\,(i,i,i)$,

$7_3.2=(i,i,i,i)\,(i,i,1)$,

$7_3.3=(i,i,i,1)\,(i,i,i)$,

$7_4.1=(i,i,i)\,(i)\,(i,i,i)$,

$7_4.2=(i,i,i)\,(i)\,(i,i,1)$,

$7_4.3=(i,i,i)\,(1)\,(i,i,i)$,

$7_5.1=(i,i,i)\,(i,i)\,(i,i)$,

$7_5.2=(i,i,i)\,(i,i)\,(i,1)$,

$7_5.3=(i,i,i)\,(i,1)\,(i,i)$,

$7_5.4=(i,i,1)\,(i,i)\,(i,i)$,

$7_6.1=(i,i)\,(i,i)\,(i)\,(i,i)$,

$7_6.2=(i,i)\,(i,i)\,(i)\,(i,1)$,

$7_6.3=(i,i)\,(i,i)\,(1)\,(i,i)$,

$7_6.4=(i,i)\,(i,1)\,(i)\,(i,i)$,

$7_6.5=(i,1)\,(i,i)\,(i)\, (i,i)$,

$7_7.1=(i,i)\,(i)\,(i)\,(i)\,(i,i)$,

$7_7.2=(i,i)\,(i)\, (i)\,(i)\,(i,1)$,

$7_7.3=(i,i)\,(i)\,(i)\,(1)\,(i,i)$,

$7_7.4=(i,i)\,(i)\,(1)\,(i)\,(i,i)$,

$7_7.6=(i,i)\,(i)\,(i)\,(1)\, (i,1)$.

\end{document}